\newtheorem{theorem}{Theorem}[section]
\newtheorem*{thm}{Theorem}
\newtheorem{lemma}[theorem]{Lemma}
\newtheorem*{problemA}{Problem (A)}
\newtheorem*{problemB}{Problem (B)}
\newtheorem*{theorem1}{Theorem 1}
\newtheorem*{theorem2}{Theorem 2}
\newtheorem*{corollary3}{Corollary 3}
\newtheorem*{definition4}{Definition 4}
\newtheorem*{definition5}{Definition 5}
\newtheorem*{definition6}{Definition 6}
\newtheorem*{definition7}{Definition 7}
\newtheorem*{definition8}{Definition 8}
\newtheorem*{theorem9}{Theorem 9}
\newtheorem*{lemma10}{Lemma 10}
\newtheorem*{lemma11}{Lemma 11}
\newtheorem*{lemma12}{Lemma 12}
\newtheorem*{lemma13}{Lemma 13}
\newtheorem*{lemma14}{Lemma 14}
\newtheorem*{lemma15}{Lemma 15}
\newtheorem*{definition16}{Definition 16}
\newtheorem*{lemma17}{Lemma 17}
\newtheorem*{lemma18}{Lemma 18}
\newtheorem*{lemma19}{Lemma 19}
\theoremstyle{definition}
\theoremstyle{remark}
\numberwithin{equation}{section}
\newcommand{\R}{\mathbb{R}}
\newcommand{\e}{e_\alpha}
\newcommand{\be}{\begin{equation}}
\newcommand{\ee}{\end{equation}}
\newcommand{\bd}{\begin{displaymath}}
\newcommand{\ed}{\end{displaymath}}
\newcommand{\del}{\delta}
\renewcommand{\d}{\delta }
\renewcommand{\e}{\varepsilon}
\renewcommand{\d}{\delta }
\renewcommand{\l }{\lambda }
\begin{document}
	\date{}
	\title[Rigidity of stable minimal hypersurfaces in asymptotically flat spaces]
	{Rigidity of stable minimal hypersurfaces in \\  asymptotically flat spaces}
	\author{Alessandro Carlotto}
	\address{ETH - Institute for Theoretical Studies  \\
		ETH \\
		Z\"urich, Switzerland}
	\email{alessandro.carlotto@eth-its.ethz.ch}
	
	\thanks{During the preparation of this work, the author was partially supported by Stanford University and NSF grant DMS-1105323.}

	\begin{abstract} We prove that if an asymptotically Schwarzschildean 3-manifold $(M,g)$ contains a properly embedded stable minimal surface, then it is isometric to the Euclidean space. This implies, for instance, that in presence of a positive ADM mass any sequence of solutions to the Plateau problem with diverging boundaries can never have uniform height bounds, even at a single point. An analogous result holds true up to ambient dimension seven provided polynomial volume growth on the hypersurface is assumed.
	\end{abstract}
	
	\maketitle
	\section{Introduction}
	
	Asymptotically flat manifolds naturally arise, in general relativity, as models for \textsl{isolated gravitational systems} and can be regarded as one of the most basic classes of solutions to the Einstein equations. Their study has flourished over the last fifty years and the geometric and physical properties of these spaces have been widely investigated. In this article, our work is centered around the following fundamental question:
	
	\begin{problemA}\label{question}
		Are there complete, stable minimal hypersurfaces in asymptotically flat manifolds?
	\end{problemA}
	
	Throughout this paper, the word \textsl{complete} is always meant to implicitly refer to \textsl{non compact} minimal hypersurfaces without boundary. Furthermore, we shall tacitly assume all our hypersurfaces to be two-sided.
	
	Apart from their intrinsic relevance, stable minimal hypersurfaces naturally arise as limits of two categories of important variational objects:
	\begin{enumerate}
		\item[(L1)]{sequences of minimizing currents solving the Plateau problem for diverging boundaries}
		\item[(L2)]{sequences of large isoperimetric boundaries or, more generally, of large volume-preserving stable CMC hypersurfaces \cite{EM12, EM13}}
	\end{enumerate}
	and thus their study plays a key role in the process of deeper understanding the large-scale geometry of initial data sets.
	
	From our perspective, the previous question had a twofold motivation: on the one end it could be regarded as a natural extension of the analysis of stable minimal hypersurfaces in the Euclidean space, on the other it implicitly arose in the proof of the \textsl{Positive Mass Theorem} by Schoen-Yau \cite{SY79}. Even in the most basic of all cases, namely when $(M,g)$ is $\mathbb{R}^{n}$ with its flat metric and $\Sigma^{n-1}$ is assumed to be an entire minimal graph, the study of Problem (A) has played a crucial role in the development of Analysis along the whole course of the twentieth century:
	
	\begin{problemB}\label{bernstein}
		Are affine functions the only entire minimal graphs over $\mathbb{R}^{n-1}$ in $\mathbb{R}^{n}$?
	\end{problemB}
	
	Indeed, minimal graphs are automatically stable (in fact: area-minimizing) by virtue of a well-known calibration argument \cite{CM11} and thus Problem (B) can be regarded as the most special subcase of Problem (A).
	Such problem, which is typically named after S. N. Bernstein, was formulated around 1917 \cite{Ber17} as an extension of the $n=3$ case, which Bernstein himself had settled (see also \cite{Fle62} for a different approach). In higher dimensions, the answer is positive only up to ambient dimension 8 and is due to De Giorgi (for $n=4$, \cite{dG65}), Almgren (for $n=5$, \cite{Alm66}) and Simons (for $6\leq n\leq 8$, \cite{Sim68}) who also showed that the conjecture is false for $n\geq 9$ because of the existence of non-trivial area-minimizing cones in $\mathbb{R}^{n-1}$ (see \cite{BdGG69}).
	
	When the ambient manifold is Euclidean, but $\Sigma$ is only known to be stable (and not necessarily graphical) a similar classification result is only known when $n=3$ and it was obtained independently by do Carmo and Peng \cite{dCP79} and Fischer-Colbrie and Schoen \cite{FS80}:
	
	\begin{thm}\cite{dCP79,FS80}
		The only complete stable oriented minimal surface in $\R^{3}$ is the plane.
	\end{thm} 
	
	However, the same statement is still not known to be true in $\mathbb{R}^{n}$ for $n\geq 4$ unless the minimal hypersurface $\Sigma^{n-1}$ under consideration is assumed to have \textsl{polynomial volume growth} meaning that for some (hence for any) point $p$
	\[
	\mathscr{H}^{n-1}(\Sigma\cap B_{r}(p))\leq \theta^{\ast} r^{n-1}, \ \ \textrm{for all} \ r>0.
	\]

	Before stating our main theorems concerning question (A), we need to recall an essential physical assumption which will always be tacitly made in the sequel of this article. It is customary in general relativity to assume that the energy density measured by any physical observer is \textsl{non-negative} at each point: this turns out to imply the requirement, in the time-symmetric case (which is the one we are considering here), that the scalar curvature be non-negative at all points of $M$.
	
	In all of the statements below and throughout the paper we shall assume that the manifold $(M,g)$ is connected, orientable and has finitely many ends.
	
	Our first theorem states that there is a wide and phyisically relevant class of asymptotically flat manifolds for which positivity of the ADM mass is an obstruction to the existence of stable minimal surfaces. Once again, we refer the readers to the next section for the defintions of ADM mass: for the sake of this Introduction, they may consider the ADM mass $\mathcal{M}$ a scalar quantity measuring the gravitational deformation of $(M,g)$ from the trivial couple $(\R^{n},\delta)$.
	
	\begin{theorem1}\label{thm:1}
		Let $(M,g)$ be an asymptotically Schwarzschildean 3-manifold of non-negative scalar curvature. If it contains a complete, properly embedded stable minimal surface $\Sigma$, then $\left(M,g\right)$ is isometric to the Euclidean space $\R^{3}$ and $\Sigma$ is an affine plane. 
	\end{theorem1}  
	
	An analogous result is obtained for ambient dimension $4\leq n<8$ under an a-priori bound on the volume growth of $\Sigma$ (as we specified above) and provided the stability assumption is replaced by \textsl{strong} stability.

	\begin{theorem2} \label{thm:2}
		Let $(M,g)$ be an asymptotically Schwarzschildean manifold of dimension $4\leq n<8$ and non-negative scalar curvature. If it contains a complete, properly embedded strongly stable minimal hypersurface $\Sigma$ of polynomial volume growth, then $\left(M,g\right)$ is isometric to the Euclidean space $\R^{n}$ and $\Sigma$ is an affine hyperplane.
	\end{theorem2}
	
	These two theorems also apply to the physically relevant case when the ambient manifold $M$ has a compact boundary (an \textsl{horizon}, for instance) once it is assumed that $\Sigma\subset M\setminus\partial M$. If instead $\Sigma$ is allowed to intersect the boundary $\partial M$ (and thus to have a boundary $\partial\Sigma$), then a variation on the argument we are about to present allows to prove that there cannot be stable \textsl{free boundary} minimal surfaces in non-trivial asymptotically Schwarzschildean 3-manifolds with weakly mean-convex boundary $\partial M$ (with respect to the outward pointing unit normal).
	
	We expect Theorem 1 to be sharp at that level of generality, and more specifically we certainly do not expect the assumption of \textsl{properness} to be inessential to the above theorems, unless $\Sigma$ is assumed to be (locally) area-minimizing in which case properness can be easily proved via a standard local replacement argument. Moreover, we expect the minimal surface $\Sigma$ to be automatically proper under the additional assumption that $M$ has an \textsl{horizon boundary}, more precisely that $\partial M$ is a finite union of minimal spheres and there are no other closed minimal surfaces in $M$. However, we will not investigate these aspects further in this work as we plan to analyse them carefully in a forthcoming paper with O. Chodosh and M. Eichmair.
	
	Theorem 1 has several remarkable consequences and, among these, we would like to mention an application to the study of sequences of solutions to the Plateau problem for diverging boundaries belonging to a given hypersurface. We will first need some terminology.
	Given an asymptotically flat manifold $(M,g)$ \textsl{with one end} and, correspondingly, a system of asymptotically flat coordinates $\left\{x\right\}$ we call \textsl{hyperplane} a subset of the form $\Pi=\left\{x\in\R^{n}\setminus B \ | \sum_{i=1}^{n}a_{i}x^{i}=0\right\}$ for some real numbers $a_{1},a_{2},\ldots, a_{n}$. Possibly by changing $\left\{x\right\}$ (we do not rename) one can always reduce to the case when $a_{1}=\ldots=a_{n-1}=0$ and $a_{n}=1$. In this setting, we define \textsl{height} of a point in $M\setminus Z \simeq \R^{n}\setminus B$ the value of its $x^{n}-$coordinate. 
	Moreover, we denote by $x'$ the ordered $(n-1)-$tuple corresponding to the first $(n-1)$ coordinates of a point in $M\setminus Z$.
	
	\begin{corollary3}\label{cor:3} Let $(M,g)$ be an asymptotically Schwarzschildean 3-manifold of non-negative scalar curvature and positive ADM mass, let $\Pi$ an hyperplane and let $\left(\Omega_{i}\right)_{i\in\mathbb{N}}$ any monotonically increasing sequence of regular, relatively compact domains such that $\cup_{i}\Omega_{i}=\Pi$. For any index $i$, define $\Gamma_{i}$ to be a solution of the Plateau problem with boundary $\partial\Omega_{i}$. Then for each $x'\in\Pi$ the sequence $\left(\Gamma_{i}\right)_{i\in\mathbb{N}}$ cannot have uniformly bounded height at $x'$, namely
		\[
		\liminf_{i\to\infty} \min_{\left(x',x^{3}\right)\in\Gamma_{i}} |x^{3}|=+\infty.
		\]
	\end{corollary3}
	
	This corollary follows at once from Theorem 1 by means of a standard compactness argument (see, for instance, \cite{Whi87b}).
	
	As anticipated above, the proofs of our rigidity results are inspired by the proof given by Schoen-Yau of the \textsl{Positive Mass Theorem} in \cite{SY79} where (arguing by contradiction) negativity of the ADM mass is exploited for constructing a (strongly) stable complete minimal surface of planar type, thereby violating the stability inequality by a preliminary reduction, via a density argument, to a Riemannian metric of strictly positive scalar curvature, at least outside a compact set. In our case, we need to deal with two substantial differences:
	\begin{enumerate}
		\item{the structure and the behaviour at infinity of the hypersurface $\Sigma$  (in terms of topology, number of ends, asymptotics) are \textsl{not} known a priori;}
		\item{the metric $g$ is only required to have non-negative scalar curvature, thereby admitting the (relevant) case when it is in fact scalar flat, as prescribed by the Einstein constraints in the \textsl{vacuum} case.}
	\end{enumerate}
	
	These delicate aspects are not dealt with in previous works (specifically: the statements about large CMC spheres in \cite{EM12} either assume \textsl{quadratic area growth} or \textsl{strict positivity} of the ambient scalar curvature) and indeed one crucial part of our study (and a preliminary step in the proof of Theorem 1 and Theorem 2) is to characterize the structure at infinity of a complete minimal hypersurface having finite Morse index.

	Essentially, we extend to asymptotically flat manifolds the Euclidean structure theorem by Schoen \cite{Sch83} which states, roughly speaking, that any such minimal hypersurface has to be \textsl{regular at infinity} in the sense that it can be decomposed (outside a compact set) as a finite union of graphs with at most logarithmic growth when $n=3$ or polynomial decay (like $|x'|^{3-n}$) when $n\geq 4$. Schoen proved this theorem making substantial use of the Weierstrass representation for minimal surfaces, a tool which is strongly peculiar of the Euclidean setting as its applicability relies on the fact that the coordinate function have harmonic restrictions to minimal submanifolds. Instead, our approach has a more analytic character and is thus applicable to a wider class of spaces.

	The results contained in this article have been first announced in June 2013 and made available, in a preliminary form, in October 2013: in that version we gave a rather different proof of Theorem 1 which seemed to require a quadratic area growth assumption, in analogy with the higher dimensional case. While completing the preparation of the present article, we were communicated that O. Chodosh and M. Eichmair were able to remove, independently of us, such assumption in our rigidity theorem. 
	
	We would like to conclude this Introduction by mentioning the very recent paper \cite{CS14} by the author and R. Schoen, where we show that the rigidity Theorem 1 (as well as Theorem 2) is essentially sharp by constructing asymptotically flat solutions of the Einstein constraint equations in $\R^{n}$ (for $n\geq 3$) that have positive ADM mass and are \textsl{exactly flat} outside of a solid cone (for any positive value of the corresponding opening angle) so that they contain plenty of complete, stable hypersurfaces. A posteriori, this strongly justifies our requirement that the metric $g$ is asymptotically Schwarzschildean. 
	
	\
	
	\section{Definitions and notations}

	We need to start by recalling the definition of weighted Sobolev and H\"older spaces in the Euclidean setting $(\R^{n},\delta)$. Here and in the sequel we always assume $n\geq 3$ and we let $r\in \mathcal{C}^{\infty}(\R^{n})$ any positive function which equals the usual Euclidean distance $|\cdot|$ outside of the unit ball. 
	
	\begin{definition4}\label{def:SobSpace} Given an index $1\leq p\leq \infty$ and a weight $\delta\in\R$ we define the weighted Lebesgue spaces $\mathcal{L}^{p}_{\delta}(\R^{n})$ as the sets of measurable functions on $\R^{n}$ such that the corresponding norms $\left\|\cdot\right\|_{\mathcal{L}^{p}_{\delta}}$ are finite, with
		\[
		\left\|u\right\|_{\mathcal{L}^{p}_{\delta}}=
		\begin{cases} \left(\int_{\R^{n}}|u|^{p}r^{-\delta p-n}\,d\mathscr{L}^{n}\right)^{1/p}, \ & \ \textrm{if} \ p<\infty \\
		\left\|r^{-\delta}u\right\|_{\mathcal{L}^{\infty}}, \ & \ \textrm{if} \ p=\infty.
		\end{cases}
		\]
		Correspondingly, for an integer $k\in\mathbb{N}$ we define the weighted Sobolev spaces $\mathcal{W}^{k,p}_{\delta}(\R^{n})$ as the sets of measurable functions for which the norms
		\[ \left\|u\right\|_{\mathcal{W}^{k,p}_{\delta}}=\sum_{0\leq|\gamma|\leq k} \left\|\partial^{\gamma}u\right\|_{\mathcal{L}^{p}_{\delta-|\gamma|}}
		\]
		are finite.
	\end{definition4}
	
	\begin{definition5}\label{def:HolSpace}
		Given an integer $k\in\mathbb{N}$, and real numbers $\delta\in\R$ and $\beta\in\left(0,1\right)$ we define the H\"older space $\mathcal{C}^{k,\beta}_{\delta}(\mathbb{R}^{n})$ as the set of continuous functions on $\R^{n}$ for which the norm $\left\|\cdot\right\|_{\mathcal{C}^{k,\beta}_{\delta}}$
		\[ \left\|u\right\|_{\mathcal{C}^{k,\beta}_{\delta}}=\sum_{0\leq|\gamma|\leq k} \sup_{x\in\R^{n}} r(x)^{-\delta+|\gamma|}\left|\partial^{\gamma}u(x)\right|+\sum_{|\gamma|=k}\sup_{|x-y|\leq r(x)}\frac{|r(x)^{-\delta+k}\partial^{\gamma}u(x)-r(y)^{-\delta+k}\partial^{\gamma}u(y)|}{|x-y|^{\beta}}
		\] 
		is finite.
	\end{definition5}
	
	If $M$ is a $\mathcal{W}^{k,p}$ (resp. $\mathcal{C}^{k,\beta}$) manifold such that there is a compact set $Z$ for which $M\setminus Z$ consists of a finite, disjoint union $\bigsqcup_{l=1}^{N}E_{l}$ and for each index $l$ there is a diffeomorphism (of the appropriate level of regularity) $\Phi_{l}:E_{l}\to\mathbb{R}^{n}\setminus B_{l}$ (for some Euclidean ball $B_{l}$) then one can easily define the weighted spaces $\mathcal{W}^{k,p}_{\delta}$ (resp. $\mathcal{C}^{k,\beta}_{\delta}$). That is done, routinely, by choosing a finite atlas of $M$ consisting of the charts at infinity together with finitely many pre-compact charts and adding the $\mathcal{W}^{k,p}_{\delta}(\mathbb{R}^{n}\setminus B_{l})$ (resp. $\mathcal{C}^{k,\beta}_{\delta}(\mathbb{R}^{n}\setminus B_{l})$) norm on the former to the $\mathcal{W}^{k,p}$ (resp. $\mathcal{C}^{k,\beta}$) norm on the latter ones. The resulting spaces $\mathcal{W}^{k,p}_{\delta}$ (resp. $\mathcal{C}^{k,\beta}_{\delta}$) as well as their topology are not canonically defined, yet only depend on the choice of the diffeomorphisms $\Phi_{l}$ for $l=1,\ldots, N$.
	Such definition is easily extended to tensors of any type by following the very same pattern.

	\begin{definition6}\label{def:AFIDS}
		Given an integer $n\geq 3$, and real numbers $p>\frac{n-2}{2}, \ \beta\in(0,1)$ a complete manifold $(M,g)$ is called asymptotically flat of type $(p,\beta)$ if:
		\begin{enumerate}
			\item{there exists a compact set $Z\subset M$ (the \textsl{interior} of the manifold) such that $M\setminus Z$ consists of a disjoint union of \textsl{finitely many} ends, namely $M\setminus Z=\bigsqcup_{l=1}^{N}E_{l}$ and for each index $l$ there exists a smooth diffeomorphism $\Phi_{l}:E_{l}\to\R^{n}\setminus B_{l}$ for some open ball $B_{l}\subset \R^{n}$ containing the origin so that the pull-back metric $\left(\Phi_{l}^{-1}\right)^{\ast}g$ satisfies the following condition:
				\[
				\left(\left(\Phi_{l}^{-1}\right)^{\ast}g\right)_{ij}-\delta_{ij} \ \in \ \mathcal{C}^{2,\beta}_{-p}(\mathbb{R}^{n}\setminus B_{l})
				\]
			}
			\item{the \textsl{scalar curvature} $R$ is integrable, namely
				\[ R\ \in \ \mathcal{L}^{1}(M).
				\]
			}
		\end{enumerate}
	\end{definition6}
	
	From now onwards and throughout this article, we will assume to deal with asymptotically flat manifolds with only one end. This does not really cause any loss of generality for what concerns the proof of our rigidity results, the modifications to handle the case of multiple ends (of the ambient manifold) being of purely notational character.
	
	As first suggested in \cite{SY79} (but see also \cite{EHLS11}), for a number of purposes it is convenient to work, whenever possible, with asymptotically flat data that have a particularly simple description at infinity. 
	
	\begin{definition7}\label{def:HDS}
		Let $n\geq 3$, and let $(M,g)$ be an \textsl{asymptotically flat} manifold with one end. We say that $(M,g)$ is asymptotically Schwarzschildean if there exists diffeomorphisms (as in the Definition 6) as well as a function $h\in \mathcal{C}^{2,\beta}_{2-n}$ such that for $i,j=1,2,\ldots,n$
		\bd
		h(x)=1+a\left|x\right|^{2-n}
		\ed
		\bd
		g_{ij}=h^{\frac{4}{n-2}}\del_{ij}+O^{2,\beta}\left(\left|x\right|^{1-n}\right).
		\ed
	\end{definition7}
	
	We then recall the notion of ADM mass, which was introduced in \cite{ADM59} in the context of the Hamiltonian formulation of general relativity (see also \cite{Bar86} for a mathematical discussion of its well-posedness).
	
	\begin{definition8}\label{def:ADM}
		Given an asymptotically flat manifold $(M,g)$ with one end (so that the scalar curvature is integrable by assumption) one can define the ADM \textsl{mass} $\mathcal{M}$ to be
		\bd
		\mathcal{M}=\frac{1}{2\left(n-1\right)\omega_{n-1}}\lim_{r\to\infty}\int_{\left|x\right|=r}\sum_{i,j=1}^{n}\left(g_{ij,i}-g_{ii,j}\right)\frac{x^{j}}{\left|x\right|}\,d\mathscr{H}^{n-1}
		\ed
		where $\omega_{n-1}$ is the volume of the standard unit sphere in $\R^{n}$.
	\end{definition8}
	
	In 1979 Schoen and Yau proved the most fundamental property of this quantity, namely its positivity.
	
	\begin{theorem9}\cite{SY79, Wit81}\label{thm:PMT}
		Let $(M,g)$ be an asymptotically flat manifold of dimension $3\leq n< 8$ with one end and satisying the dominant energy condition. Then $\mathcal{M}\geq 0$ and equality holds if and only if $(M,g)$ is isometric to the Euclidean space $\left(\R^{n},\d\right)$.
	\end{theorem9}
	
	In fact, in case of multiple ends such inequality holds at the level of each end and a closer look at the proof of the equality case \cite{Sch84} shows that it is enough to have \textsl{one} end of null ADM mass to force the whole space to be globally isometric to $\left(\R^{n},\d\right)$.
	
	Our rigidity results need to make use of the physical meaning of the constant $a$ given in Definition 7. To that aim, we recall the following basic computation (the reader might check it, for example, in \cite{EHLS11}).
	
	\begin{lemma10}
		Let $(M,g)$ be an asymptotically flat manifold (with one end) having harmonic asymptotics and let $a$ be given by Definition 7. Then
		\bd
		\mathcal{M}=\frac{(n-2)}{2}a.
		\ed
	\end{lemma10}
	
	As a result, if we prove that an asymptotically Schwarzschildean manifold (of non-negative scalar curvature) has $a=0$ then it follows by the \textsl{Positive Mass Theorem} (Theorem 9) that the expansion of the metric has to be trivial to \textsl{all} orders (namely $g=\del$).
	
	The very same computation shows, more generally, the following: if $(M_{1},g_{1})$ is an asymptotically flat manifold (according to Definition 6) and $h=1+a|x|^{2-n}+O^{2,\beta}(|x|^{1-n})$ is $\mathcal{C}^{2,\beta}_{loc}$ then the ADM mass of $(M_{2},g_{2})$ where $g_{2}=h^{\frac{4}{n-2}}g_{1}$ and $n$ is the dimension of $M_{1}$ is given by
	\[
	\mathcal{M}_{2}=\mathcal{M}_{1}-\frac{1}{2\omega_{n-1}}\lim_{r\to\infty}\int_{|x|=r}h\nabla_{\nu_{1}}h\,d\mathscr{H}^{n-1}=\mathcal{M}_{1}+\frac{(n-2)}{2}a.
	\] 
	
	\
	\textbf{Notations.} We denote by $R$ (resp. $Ric(\cdot,\cdot)$) the scalar (resp. Ricci) curvature of $(M,g)$, by $R_{\Sigma}$ the scalar curvature of $\Sigma\hookrightarrow (M,g)$ and by $\nu$ (a choice of) its unit normal. This is globally well-defined if $\Sigma$ is two-sided (or, equivalently, orientable since $M$ is itself assumed to be orientable).  For Euclidean balls centered at the origin we omit explicit indication of the center and hence we write $B_{r}$ instead of $B_{r}(p)$. The latter notation is instead reserved to metric balls in the ambient manifold $(M,g)$. We let $C$ be a real constant which is allowed to vary from line to line, and we specify its functional dependence only when this is relevant or when ambiguity is likely to arise.   
	
	\section{Proof of the rigidity statements}
	
	The scope of this section is to give a detailed proof of Theorem 1 and Theorem 2. Our arguments turn out to be quite different in the case $n=3$ (treated in Section \ref{sec:rig3}) and $4\leq n<8$ (treated in Section \ref{sec:rig7}), even though the conceptual scheme is quite similiar and in both cases we make use of a deep result of L. Simon \cite{Sim83b, Sim85} concerning the uniqueness of tangent cones in the context of his study of isolated singularities of geometric variational problems. 
	Each of the two proofs is split into a sequence of lemmata which may be of independent interest, and in particular we remark that the union of Lemma 12, Lemma 13 and Lemma 14 gives a characterization, in the context of asymptotically flat spaces, of complete minimal surfaces of finite Morse index.
	
	\
	
	We shall start by introducing a viewpoint which will turn out to be very convenient. Given $\Sigma\hookrightarrow M$ a minimal hypersurface in an asymptotically flat manifold (Definition 6) we let $\Sigma^{i}$ be an \textsl{unbounded} connected component of $\Sigma\setminus Z$, where $Z$ is the core of $M$. Thus, there exists an end\footnote{In fact, $E$ is the only end of $M$ as we are assuming, for the sake of simplicity, to deal with asymptotically flat manifolds with only one end (as stated in Section 2).} $E$ of $M$ such that $\Sigma^{i}\hookrightarrow E$ and so (by means of the diffeomorphism $\Phi: E\to\mathbb{R}^{n}\setminus B$) we can consider a copy of $\Sigma^{i}$, which we will call $\Sigma^{i}_{0}$, as a submanifold with boundary in $\mathbb{R}^{n}\setminus B$ with the geometry induced by the ambient Euclidean metric. As a result, each $\Sigma^{i}_{0}$ is not minimal, but is a stationary point of a functional $\textbf{F}=\textbf{F}_{0}+\textbf{E}$ with $\textbf{F}_{0}$ the $(n-1)$-dimensional Hausdorff measure and $\textbf{E}$ an error term which decays at infinity with a rate that depends on the asymptotics of $g_{ij}-\delta_{ij}$ at infinity. Moreover, if $\Sigma$ is stable then each $\Sigma^{i}_{0}$ will be a stable stationary point for $\textbf{F}$. Finally, it is convenient to assume that the ball $B$ is centered at the origin (which of course we can always arrange, be means of a translation). We shall denote the ADM mass of the end $E$ by $\mathcal{M}$. 
	
	\subsection{The proof for $n=3$}\label{sec:rig3}

	As a preliminary and general remark, we shall remind the reader of a finiteness result (due to D. Fischer-Colbrie) concerning the structure at infinity of a complete minimal surface having finite Morse index in a 3-manifold of non-negative scalar curvature.
	
	\begin{lemma11}\label{lem:12}\cite{FC85}
		Let $(N,g)$ be a 3-manifold with non-negative scalar curvature, $\Gamma$ a complete oriented minimal surface in $N$. If $\Gamma$ has finite Morse index then there exists a compact set $\Omega$ and a smooth positive function $u$ that solves the equation $Lu=0$ on $\Gamma\setminus\Omega$. The metric $u^2g_{|\Gamma}$ is a complete metric on $\Gamma$ with non-negative Gaussian curvature outside $\Omega$. In particular, it follows that $\Gamma$ is conformally diffeomorphic to a complete Riemann surface with a finite number of points removed.
	\end{lemma11}
	
	From here onwards, let us get back to the asymptotically flat setting presented in the statement of Theorem \ref{thm:1}.
	
	\begin{lemma12}\label{lem:13}
		Let $\Sigma\hookrightarrow M^3$ be a complete minimal surface of finite Morse index. Given any $p_0\in M$ there exists $r_0>0$ such that for any $r>r_0$ the intersection of $\Sigma$ with $\partial B_r(p_0)$ in $M$ is transverse. Furthermore $\Sigma\setminus B_{r_0}(p_0)$ can be decomposed in a finite number of annular ends.		
	\end{lemma12}	
	
	By the word \textsl{annular}, we mean that each connected component of $\Sigma\setminus B_{r_{0}}(p_0)$ is diffeomorphic to an annulus $S^1\times [r_0,+\infty)$ (and that the same conclusion holds for every $r>r_0$).
	
	\
	
	In the setup described at the very beginning of this section, let us assume to fix an index $i$ and so we will denote, for simplicity of notation, $\Sigma^{i}_{0}$ by $\Sigma_{0}\hookrightarrow\mathbb{R}^{3}\setminus B$. Let us denote by $A_{0}$ the second fundamental form of $\Sigma_{0}$, by $H_{0}$ (resp. $K_{0}$) its mean (resp. Gaussian) curvature and by $\nu_{0}$ its Euclidean Gauss map. 
	Patently, the geodesic spheres in the asymptotically flat ambient manifold $(M,g)$ become arbitrarily close to coordinate spheres in each end for very large values of the radius: hence, it suffices to prove the assertions of Lemma 12 for $\Sigma_0$ in lieu of $\Sigma$.
	
	\begin{proof}
		By means of a variation on standard curvature estimates in \cite{Sch83b}, we know that there exists a constant $C>0$ such that (for some, hence for any, fixed point $p_0\in M$)
		\[ \sup_{p\in\Sigma\setminus Z} d_{g}(p,p_{0})|A(p)|\leq C, \ \ \sup_{p\in \Sigma} |A(p)|\leq C
		\]
		and thus, by virtue of the simple comparison result
		\[|A_{0}(x)-A(x)|\leq \frac{C}{|x|^{2}}\left(|x||A(x)|+1\right)
		\] 
		(where $\left\{x\right\}$ is a set of asymptotically flat coordinates in $E$) we have that
		\[ |A_{0}(x)|\leq \frac{C}{|x|}.
		\] 
		Similarly, since $\Sigma$ is assumed to be minimal one obtains that $|H_{0}(x)|\leq C|x|^{-2}$.

		These two facts imply that for any sequence $\lambda_{m}\searrow 0$ the rescaled surfaces $\lambda_{m}\Sigma_{0}$ converge (up to a subsequence, which we do not rename) to a \textsl{stable minimal lamination} $\mathcal{L}$ in $\mathbb{R}^{3}\setminus\left\{0\right\}$. The convergence happens, locally, in the sense of smooth graphs.
		
		Now, let $L$ be a leaf, namely a (maximal) connected component of $\mathcal{L}$. If $0\notin \overline{L}$ in $\mathbb{R}^{3}$, then $L=\overline{L}$ is a connected, stable minimal surface in the Euclidean space, hence a plane by \cite{FS80, dCP79}. However, the same conclusion is also true in the case when $0\in \overline{L}$ thanks to a removable singularity theorem obtained by Gulliver and Lawson \cite{GL86} (see also Meeks-Perez-Ros \cite{MPR13} and Colding-Minicozzi \cite{CM05}). As a result, every leaf of $\mathcal{L}$ is a flat plane in $\R^{3}$ and hence, since trivially any two leaves cannot intersect (due to the very definition of lamination) we conclude that, modulo an ambient isometry, $\mathcal{L}=\mathbb{R}^{2}\times Y$ for some $Y\subset\mathbb{R}$ closed.
		
		Because a plane in $\mathbb{R}^{3}$ is totally geodesic, it follows at once that we can upgrade our curvature estimate to $|A_{0}(x)|\leq o(1)|x|^{-1}$ as $x$ goes to infinity. This implies that for $r_{0}$ large enough the surface $\Sigma_{0}\setminus B_{r_{0}}$ (which, we recall, is assumed to be proper) satisfies $|A_{0}(x)|\leq C|x|^{-1}$ for some $C\in (0,1)$ and we claim this forces $\Sigma_{0}\setminus B_{r_{0}}$ to be annular (namely to consist of finitely many annular ends). 
		
		Indeed, let $f:\Sigma_{0}\to \mathbb{R}$ be the restriction of the Euclidean distance function $|\cdot|$ to the surface $\Sigma_0$. The critical points of $f$ occur at those points $x_{0}$ where the unit normal $\nu_{0}$ is parallel to the position vector $x$. A trivial computation shows that the Hessian at such a point is given by $\nabla^{2}f(x_{0})[v,v]=2(|v|^{2}-A_{0}(x_{0})[v,v]\delta(x_{0},\nu_{0}))$ for any vector $v\in T_{x_{0}}\Sigma_{0}\simeq\mathbb{R}^{2}$. We claim that indeed, under our assumptions $\nabla^{2}f(x_{0})[v,v]>0$ for all $v\in T_{x_{0}}\Sigma_{0}$ and, to check that, it is enough (by homogeneity) to reduce to the case when $v$ has unit (Euclidean) length. This is trivial, since $\nabla^{2}f(x_{0})[v,v]\geq 2(1-|A_{0}(x_{0})||x_{0}|)\geq 2(1-C)>0$.  
		This shows that all \textsl{interior} critical points of the function $f$ are strict local minima, which implies (since $\Sigma_{0}$ is connected) that $f$ does not have any interior critical points at all and so  $\Sigma_{0}$ is annular, as we had to prove. Indeed, if $f$ had an interior critical point (say $x_{\ast}$), we could easily obtain a second one (of saddle type) by means of a simple one-dimensional min-max scheme, namely looking at the space of paths connecting $x_{\ast}$ with a point on the boundary $\Sigma_0\cap \partial B_{r_0}$.
		At this stage, the fact that the number of ends in question is finite follows from Lemma 11.
	\end{proof}
	
	\textsl{By working one annular component at a time we can (and we shall) then assume, without renaming, that $\Sigma_{0}$ is a connected annulus, in the sense specified above.}
	
	\
	
	Let us explicitly remark that, by virtue of the argument above, for any sequence $\lambda_m\searrow 0$ the rescaled surfaces $\lambda_m \Sigma_0$ shall converge (in $\mathbb{R}^3\setminus\left\{0\right\}$) to a plane passing through the origin. The convergence is meant as single-sheeted, smooth graphical convergence. As a result, a straightforward blow-down argument ensures that $g(\nu_0,x/|x|)<1/3$ for $|x|$ large enough.
	
	\begin{lemma13}
		Let $\Sigma\hookrightarrow M^3$ be a complete, properly embedded minimal surface of finite Morse index. Then $\Sigma$ has quadratic area growth, namely there exists a positive constant $\theta^{\ast}$ such that the inequality $\mathscr{H}^2(\Sigma\cap B_r(p_0))\leq \theta^{\ast}r^2$ holds true for every $r>0$ and $p_0\in M$.
	\end{lemma13}	
	
	\begin{proof}
		Thanks to the finiteness of the number of ends of $\Sigma$ and the equivalence of the Riemannian metrics $g$ and $\delta$ on the end $E$ (by which we mean the existence of a constant $C>0$ such that $C^{-1}g\leq \delta\leq Cg$) the claim follows by virtue of the co-area formula after having shown the existence of a constant $C>0$ such that $\mathscr{H}^{1}(\Sigma_0\cap \partial B_r)\leq Cr$.
		
		If that were not the case, we could find an increasing sequence of radii $r_{m}\nearrow\infty$ (with $r_{m}>r_{0}$ for each $m$) such that the length of the embedded circle $\Sigma_0\cap \partial B_{r_{m}}$ is more than $r_m m$.  We could then consider the rescaled sequence gotten by taking $\lambda_{m}=r_{m}^{-1}$ and we should have, on the one hand
		\[ \mathscr{H}^1(\lambda_m\Sigma_0 \cap\partial B_1)\geq m
		\]
		(at least for $m$ large enough), while on the other we know that $\lambda_{m}\Sigma_0\cap \left(B_{3/2}\setminus B_{1/2}\right)$ is just a graph with small Lipschitz constant (by the argument we presented in the proof of Lemma 12), hence
		\[\mathscr{H}^1(\lambda_m\Sigma_0 \cap\partial B_1)\leq 3\pi .
		\]  
		thus reaching a contradiction.
	\end{proof}

	\begin{lemma14}
		Let $\Sigma\hookrightarrow M^3$ be a complete, properly embedded minimal surface of finite Morse index. Then $\Sigma$ is regular at infinity, namely it decomposes (outside a compact set) in a finite number of graphical components, each having an expansion of the form
				\[ u(x')=a\log|x'|+b+e(x'), \ \textrm{where} \ e(x')=O(|x'|^{-1+\varepsilon}) \ \ \forall \ \varepsilon>0
				\]
			for a suitably chosen set of asymptotically flat coordinates $\left\{x\right\}$.	
	\end{lemma14}	
	
	\begin{proof}
		Because of the quadratic area growth (gained in Lemma 13), the surface $\Sigma_{0}$ does admit a cone at infinity in the following sense. For any sequence $\lambda_{m}\searrow 0$ there exists a subsequence (which we do not rename) such that $\lambda_{m}\Sigma_{0}\rightharpoonup \Gamma$ for some stable minimal cone $\Gamma\subset\mathbb{R}^{3}$, hence a flat plane. The convergence happens in the sense of integral varifolds (see Chapter 4 of \cite{Sim83}), which is to say (in our special setting) that for any fixed continuous function $f$ with compact support in $\mathbb{R}^{3}\setminus \left\{0\right\}$ one has $\int_{\left\{\lambda_{m}x: \ x\in\Sigma_{0}\right\}}f\,d\mathscr{H}^{2}\longrightarrow\int_{\Gamma}f\,d\mathscr{H}^{2}$.
		Moreover, by virtue of the previous steps, we know that $\Sigma_{0}$ is annular, so that $\Gamma$ has multiplicity one. At this stage, we are then in position to apply Theorem 5.7 in \cite{Sim85} (see also the discussion given at pp. 269-270) which implies that $\Sigma_{0}$ is an outer-graph: there exists a function $u\in \mathcal{C}^{2}(\Pi\setminus B_{r_{0}};\mathbb{R})$ whose graph coincides with $\Sigma_{0}$ and moreover
		\[ |x'|^{-1}|u(x')|+|\nabla_{\Pi}u(x')| \ \rightarrow 0 \ \ \textrm{as} \ \ |x'|\to\infty.
		\]
		Here $\Pi$ is an hyperplane and we are adopting the notations presented in Section 1.
		To refine this information and get an asymptotic expansion for $u$ we need to preliminarily check that $\Sigma_0$ has finite total curvature, that is to say
		\[
		\int_{\Sigma_0}|A_0|^2\,d\mathscr{H}^2<\infty.
		\]
		This comes at once from the aforementioned comparison relation between $A$ and $A_0$ and thanks to the stability inequality for $\Sigma$ (in applying that we need to use the logarithmic cut-off trick, exploiting the quadratic area growth).
		
		This means that, by possibly taking a larger value of $r_{0}$ and choosing a suitable set of asymptotically flat coordinates $\left\{x\right\}$, the surface $\Sigma_{0}$ coincides with the graph of a smooth function $u:\Pi\to\mathbb{R}$ (for some plane $\Pi$) such that 
		\be\label{eq:graphrough}\lim_{|x'|\to\infty}\left|\nabla_{\Pi} u(x')\right|=0, \ \ \int_{\Pi\setminus B_{r_0}}\left|\nabla^2_{\Pi} u\right|^{2}\,d\mathscr{L}^{2}<\infty.
		\ee
		
		Moreover,  we can then exploit the rough information \eqref{eq:graphrough} to improve our decay estimate to $u(x')\leq C |x'|^{1-\alpha}$ (for $\alpha>0$) and hence use this in the (perturbed) minimal surface equation to get, by a standard elliptic bootstrap argument, that in fact
		\[ u(x')=a\log|x'|+b+e(x'), \ \textrm{where} \ e(x')=O(|x'|^{-1+\varepsilon}) \ \ \forall \ \varepsilon>0
		\]
		(the details are discussed in Appendix \ref{sec:add}). This shows that $\Sigma_{0}$ and hence $\Sigma$ is regular at infinity, which completes the proof.
	\end{proof}
	
	The following lemma relies on an interesting idea suggested by O. Chodosh and M. Eichmair which allows to shorten the argument we presented in the very first version of this article.
	
	\begin{lemma15}
		Let $\Sigma\hookrightarrow M^3$ be a complete minimal surface of finite Morse index. If $\mathcal{M}>0$, then the Gauss curvature of $\Sigma$ is negative outside a compact set.
	\end{lemma15}
	
	\begin{proof}
		It is enough to recall that 
		\[ Ric(\nu,\nu)+\frac{1}{2}|A|^{2}=\frac{1}{2}R-K
		\]
		and for our class of data (see Definition 7) we have
		\[R\leq C|x|^{-4}
		\]
		(because the Schwarzschild metric is itself scalar flat) and if $\mathcal{M}>0$ then
		\[ Ric(\nu,\nu)\geq C|x|^{-3}.
		\]  
		The latter assertion relies on the general formula for the Ricci tensor
		\[ Ric_{kl}=\frac{\mathcal{M}}{|x|^{3}}\left(1+\frac{\mathcal{M}}{2|x|}\right)^{-2}\left(\delta_{kl}-3\frac{x^{p}x^{q}}{|x|^{2}}\delta_{kp}\delta_{lq}\right)+O(|x|^{-4})
		\]
		and the fact that $g(\nu,x/|x|)<1/3$ for $|x|$ large enough, which has been remarked after the proof of Lemma 12.
	\end{proof}
	
	\begin{proof}[Proof of Theorem 1]
		
		Now, we are going to eploit all of this information about the behaviour of $\Sigma$ at infinity. Indeed, the stability inequality (with the rearrangement trick by Schoen-Yau) takes the form
		\[\frac{1}{2}\int_{\Sigma}\left(R+|A|^{2}\right)\phi^{2}\,d\mathscr{H}^{2}\leq \int_{\Sigma}|\nabla_{\Sigma}\phi|^{2}\,d\mathscr{H}^{2}+\int_{\Sigma}K\phi^{2}\,d\mathscr{H}^{2}
		\]
		so by means of the logarithmic cut-off trick
		\[\frac{1}{2}\int_{\Sigma}\left(R+|A|^{2}\right)\,d\mathscr{H}^{2}\leq \int_{\Sigma}K\,d\mathscr{H}^{2}
		\]
		and thanks to the Gauss-Bonnet theorem for open manifolds (see Shiohama \cite{Shi85} or \cite{Whi87}) we know that
		\[\int_{\Sigma}K\,d\mathscr{H}^{2}=2\pi(\chi(\Sigma)-P)
		\]
		(recall that $P\geq 1$ is the number of ends of $\Sigma$) hence
		\[ 0\leq\frac{1}{2}\int_{\Sigma}\left(R+|A|^{2}\right)\phi^{2}\,d\mathscr{H}^{2}\leq 2\pi(\chi(\Sigma)-P)
		\]
		which forces $\chi(\Sigma)=1$, $P=1$ and $\Sigma$ to be totally geodesic and with vanishing restriction of the ambient scalar curvature. At that stage, an argument by Fischer-Colbrie and Schoen \cite{FS80} gives that $\Sigma$ is intrinsically flat.
		
		Lastly, we recall Lemma 15, which ensures that if $\mathcal{M}>0$, then the Gauss curvature of $\Sigma$ is negative (at least far away from the core). Thus necessarily $\mathcal{M}=0$ and making use of Lemma 10 and the rigidity part of Theorem 9, we conclude that $(M,g)$ is the Euclidean space $\mathbb{R}^{3}$ which completes the proof.
	\end{proof}

	\subsection{The proof for $4\leq n<8$}\label{sec:rig7}
	
	We now move to the proof of Theorem 2, namely the higher dimensional counterpart of the previous one. It is convenient to recall here the notion of strong stability.
	
	\begin{definition16}
		Given $\alpha\in\mathbb{R}$ we set
		\[
		\mathcal{V}_{\alpha}(\Sigma)=\left\{\phi+\alpha \ \ | \ \ \phi\in \mathcal{W}^{1,2}_{\frac{3-n}{2}}(\Sigma)\right\}, 
		\]
		and we also define
		\[
		\mathcal{V}(\Sigma)=\bigcup_{\alpha\in\mathbb{R}}\mathcal{V}_{\alpha}(\Sigma).
		\]
		We say that a minimal hypersurface $\Sigma\hookrightarrow (M,g)$ is strongly stable if the stability inequality
		\[\int_{\Sigma}(Ric(\nu,\nu)+|A|^{2})\phi^{2}\,d\mathscr{H}^{n-1}\leq\int_{\Sigma}|\nabla_{\Sigma}\phi|^{2}\,d\mathscr{H}^{n-1}
		\]
		is true for any test function $\phi\in\mathcal{V}$.
	\end{definition16}
	
	When $\Sigma$ is known a priori to approach, at suitably good rate, an hyperplane along one of its ends the previous notion has a natural geometric interpretation: $\Sigma$ is strongly stable if it is stable with respect to all deformations that are essentially \textsl{vertical translations} near infinity.

	We work here with the very same notations defined at the beginning of the previous subsection and so let $\Sigma_{0}$ be (with slight abuse of notation) one unbounded connected component of $\Sigma\setminus Z$, and considered as a properly embedded submanifold of $\mathbb{R}^{n}\setminus B$ for some Euclidean ball $B$ centered at the origin. Of course $\partial\Sigma_{0}\subset \partial B$.
	
	\begin{lemma17}
		Let $\Sigma\hookrightarrow M^n$ be a complete, properly embedded strongly stable minimal hypersurface of polynomial volume growth. Given any $p_0\in M$ there exists $r_0>0$ such that for any $r>r_0$ the intersection of $\Sigma$ with $\partial B_r(p_0)$ in $M$ is transverse. 
		Such intersection consists of finitely many, say $P$,  smooth submanifolds $\Xi_{1},\ldots,\Xi_{P}$ of dimension $n-2$ and for every $r>r_0$ the set
		$\Sigma\setminus B_{r}(p_0)$ consists of $P$ ends and in fact we have a diffeomorphism $\Sigma\setminus B_{r}(p_0)\simeq \bigsqcup \Xi_{i}\times [r,+\infty)$.	
	\end{lemma17}	
	
	\begin{proof}
		
		First of all, let us observe that (if $\left\{x\right\}$ is a set of asymptotically flat coordinates in $\mathbb{R}^{n}$) then the integral
		\[ \int_{\Sigma_{0}}\frac{|x^{\perp}|^{2}}{|x|^{n+1}}\,d\mathscr{H}^{n-1}
		\]
		is finite (here $x^{\perp}=\delta(x,\nu_{0})$, the projection of the position vector onto the normal space of $\Sigma_{0}$ at the point in question). Such claim easily follows from the general monontonicity formula by Allard (see also Section 17 in \cite{Sim83}), true for any integral $k$-varifold $V$ of bounded mean curvature $H_{0}$
		\[
		\frac{\mu_{V}\left(B_{\rho}\left(\xi\right)\right)}{\rho^{k}}-\frac{\mu_{V}\left(B_{\sigma}\left(\xi\right)\right)}{\sigma^{k}}=\int_{B_{\rho}\left(\xi\right)}\frac{H_{0}}{k}\cdot \left(x-\xi\right)\left(\frac{1}{m(r)^{k}}-\frac{1}{\rho^{k}}\right)\,d\mu_{V}+\int_{B_{\rho}\left(\xi\right)\setminus B_{\sigma}\left(\xi\right)}\frac{\left|\partial^{\perp}r\right|^{2}}{r^{k}}\,d\mu_{V}
		\]
		(where $m(r)=\textrm{max}\left\{r,\sigma\right\}$): indeed, in our case the left-hand side is bounded because of the poylnomial volume growth assumption, and of course $\int_{\Sigma_{0}}\frac{|H_{0}|}{|x|^{n-2}}\,d\mathscr{H}^{n-1}$ is finite because $|H_{0}(x)|\leq C |x|^{-2}$ since $H=0$ identically, by minimality of $\Sigma$.
		
		We then claim that the previous integrability assertion can be turned into a pointwise decay estimate, in the sense that $|x|^{-1}|x^{\perp}|=o(1)$ as $|x|$ goes to infinity. To that aim, we argue as follows. 
		Suppose, by contradiction, that such statement were false: then we could find $0<\varepsilon<1$ and a sequence of points $\left(x_{i}\right)_{i\in\mathbb{N}}$ belonging to $\Sigma_{0}$ such that the following two conditions hold:
		\be\label{contrad}
		\begin{cases}
			\left|x_{i}\right|\nearrow \infty, \ \textrm{as} \ i\to\infty \\
			\frac{\left|x_{i}^{\perp}\right|}{\left|x_{i}\right|}\geq\varepsilon, \textrm{for all} \ i\in\mathbb{N}.
		\end{cases}
		\ee
		Now, observe that since $\Sigma$ is stable we know by making use of Theorem 3 in Section 6 of \cite{SS81} that there exists a constant $C>0$ such that
		\[ \sup_{p\in\Sigma\setminus Z} d_{g}(p,p_{0})|A(p)|\leq C, \ \ \sup_{p\in \Sigma} |A(p)|\leq C
		\]
		and, once again, by the comparison result $|A_{0}(x)-A(x)|\leq  \frac{C}{|x|^{2}}\left(|x||A(x)|+1\right)$ we conclude that $|A_{0}(x)|\leq \frac{C}{|x|}$.
		
		This immediately implies that $\left|\nabla_{\Sigma_{0}}\left(x\cdot\nu_{0}\right)\right|\leq C\left|x\right|\left|A_{0}(x)\right|\leq C$ for some constant $C\geq 1$.
		This gradient bound implies that if $r=\left|x\right|$ is large enough and $\left|x\cdot\nu_{0}\left(x\right)\right|\geq \e r$ then one should also have $\left|y\cdot\nu_{0}\left(y\right)\right|\geq\e r/2$ at all points $y\in\Sigma_{0}$ belonging to the following set:
		\bd
		\mathcal{K}_{x}=\left\{y\in\Sigma_{0} | \ \textrm{there exists a path} \ \gamma_{x,y}:\left[0,1\right]\to\Sigma_{0} \ \textrm{with} \ \gamma(0)=x, \ \gamma(1)=y, \ length(\gamma_{x,y})\leq \frac{\varepsilon r}{2C}\right\}
		\ed
		where $length(\gamma)$ denotes the length of the path $\gamma$ and $C$ is the constant defined above. We claim that in fact the set $\mathcal{K}_{x}$ contains the (extrinsic) ball of center $x$ and radius $\e r/\left(4C\right)$. This follows by a standard graphicality argument (see, for instance Lemma 2.4 in \cite{CM11}) again thanks to the pointwise decay assumption on the second fundamental form of $\Sigma_{0}$. If we apply this argument to each of the points $x_{i}$ keeping in mind their definition (see \eqref{contrad}) we get that 
		\bd
		\int_{\Sigma_{0}\cap \left\{\left|x\right|>r_{i}/2\right\}}\frac{\left|y\cdot \nu_{0}(y)\right|^{2}}{\left|y\right|^{n+1}}\,d\mathscr{H}^{n-1}\left(y\right)\geq \int_{\mathcal{K}_{x_{i}}}\frac{\left|y\cdot \nu_{0}(y)\right|^{2}}{\left|y\right|^{n+1}}\,d\mathscr{H}^{n-1}\left(y\right)
		\ed
		\bd
		\geq \int_{B_{\left(\frac{\e r_{i}}{4C}\right)}\left(x_{i}\right)}\frac{\left|y\cdot \nu_{0}(y)\right|^{2}}{\left|y\right|^{n+1}}\,d\mathscr{H}^{n-1}\left(y\right)\geq C\e^{n+1}
		\ed
		(for a suitable constant $C$) but on the other hand we already know, that it must be
		\bd
		\lim_{i\to\infty}\int_{\Sigma_{0}\cap \left\{\left|x\right|>r_{i}/2\right\}}\frac{\left|y\cdot \nu_{0}(y)\right|^{2}}{\left|y\right|^{n+1}}\,d\mathscr{H}^{n-1}\left(y\right)=0
		\ed
		and these two facts together give a contradiction. 
		
		Thanks to the statement we have just proved and the properness assumption, we can find a large number $r_{0}>0$ such that the submanifold $\Sigma_{0}$ meets all the spheres $\partial B_r$ \textsl{transversely} for $r\geq r_{0}$ and such intersection consists of finitely many submanifolds of dimension $n-2$. Furthermore, the \textsl{distance squared} function $r^{2}:\Sigma_{0}\cap B_{r_{0}}^{c}\to\R$ cannot have interior critical points for $r>r_{0}$  and therefore, by basic results in Morse theory and repeating the argument for each unbounded end of $\Sigma$ (one of finitely many, due to the polynomial volume growth of $\Sigma$) we conclude that
		$\Sigma\setminus B_{r}(p_0)$ consists of $P$ ends and $\Sigma\setminus B_{r}(p_0)\simeq \bigsqcup \Xi_{i}\times [r,+\infty)$, as we had claimed.
	\end{proof}
	
	\textsl{Without loss of generality, as we did above let us assume from now onwards that $\Sigma_{0}$ is renamed to be one of those ends.}
	
	\begin{lemma18}
		Let $\Sigma\hookrightarrow M^n$ be a complete, properly embedded strongly stable minimal hypersurface of polynomial volume growth. If $4\leq n<8$ then $\Sigma$ is regular at infinity, namely it decomposes (outside a compact set) in a finite number of graphical components, each having an expansion of the form
	 \[u(x')=a+b|x'|^{3-n}+e(x'), \ \textrm{where} \     e(x')=O(|x'|^{2-n+\varepsilon}) \ \ \forall \ \varepsilon>0
		\]
		for a suitably chosen set of asymptotically flat coordinates $\left\{x\right\}$.	
	\end{lemma18}	
	
	\begin{proof}
		Because of the polynomial volume growth assumption, the hypersurface $\Sigma_{0}$ does admit a cone at infinity in the sense recalled in the previous subsection. 
		Due to the fact that the ambient dimension is less than eight we then know by \cite{Sim68} that the cone $\Gamma$ has to be regular, since it is in fact an hyperplane $\Pi$. Moreover $\Gamma$ has multiplicity one. Hence we apply Theorem 5.7 in \cite{Sim85} (see also the discussion given at pp. 269-270) which ensures that $\Sigma_{0}$ is an outer-graph: there exists a function $u\in \mathcal{C}^{2}(\Pi\setminus B_{r_{0}};\mathbb{R})$ whose graph coincides with $\Sigma_{0}$ and moreover
		\[ |x'|^{-1}|u(x')|+|\nabla_{\Pi}u(x')| \ \rightarrow 0 \ \ \textrm{as} \ \ |x'|\to\infty
		\]
		At that stage, we make use of this information (together with the fact that $\int_{\Sigma}|A|^{2}\,d\mathscr{H}^{n-1}<\infty$, which is implied by the strong stability and the polynomial volume growth assumptions) to get for $u$ the desired expansion, in formal analogy with the $n=3$ case, as discussed in Appendix \ref{sec:add}. 
	\end{proof}

	To prove infinitesimal rigidity in dimension $k$ we will make use of the Positive Mass Theorem in dimension $k-1$. It is first convenient to state and prove the following simple lemma.
	
	\begin{lemma19}\label{lem:equal}Let $(M,g)$ and $\Sigma$ be as above. Suppose that there exists a function $\psi\in\mathcal{V}_{\alpha}(\Sigma)$ such that
		\[ \int_{\Sigma}\left(\left|\nabla_{\Sigma}\psi\right|^{2}+\frac{n-3}{4(n-2)}R_{\Sigma}\psi^{2}\right)\,d\mathscr{H}^{n-1}=0
		\]
		Then $\psi=\alpha$ for $\mathscr{H}^{n-1}$  a.e. $x\in\Sigma$.
	\end{lemma19}
	
	\begin{proof}If some function $\psi$ satisfies $\int_{\Sigma}\left(\left|\nabla_{\Sigma}\psi\right|^{2}+\frac{n-3}{4(n-2)}R_{\Sigma}\psi^{2}\right)\,d\mathscr{H}^{n-1}=0$, then obviously the integral $\int_{\Sigma}R_{\Sigma}\psi^{2}\,d\mathscr{H}^{n-1}\leq 0$ and therefore, because of the strong stability assumption, the fact that $c(k)=(k-2)/4(k-1)<1/2$ for any $k\geq 3$ and that the term $Q=\frac{1}{2}\left(R+|A|^{2}\right)$ is non-negative, we know that the following chain of inequalities holds:
		\[ \int_{\Sigma}\left|\nabla_{\Sigma}\psi\right|^{2}\,d\mathscr{H}^{n-1}\geq -\frac{1}{2}\int_{\Sigma}R_{\Sigma}\psi^{2}\,d\mathscr{H}^{n-1}\geq -c(n-1)\int_{\Sigma}R_{\Sigma}\psi^{2}\,d\mathscr{H}^{n-1}.
		\]
		As a result, necessarily $\int_{\Sigma}R_{\Sigma}\psi^{2}\,d\mathscr{H}^{n-1}=0$ and thus also $\int_{\Sigma}|\nabla_{\Sigma}\psi|^{2}\,d\mathscr{H}^{n-1}=0$. This forces $\psi$ to be constant $\mathscr{H}^{n-1}$ a.e. on $\Sigma$ and due to its behaviour at infinity (recall that we assumed $\psi\in\mathcal{V}_{\alpha}$) the conclusion follows.
	\end{proof}
	
	\begin{proof}[Proof of Theorem 2]
		We now derive infinitesimal rigidity for $\Sigma$. In view of Lemma 18 we know that $\Sigma$ itself can be regarded as an asymptotically flat manifold with an induced Riemannian metric given by
		\[ \overline{g}_{ij}=g\left(\frac{\partial U}{\partial x^{i}}, \frac{\partial U}{\partial x^{j}}\right)=h^{\frac{4}{n-2}}\left(\delta_{ij}+\frac{\partial u}{\partial x^{i}}\frac{\partial u}{\partial x^{j}}\right)+O(|x|^{-(n-1)}), \ \ \textrm{as} \ \ |x|\to\infty.
		\] 
		This is true at each of its ends ($u$ and $h$ are, respectively, the defining function of the end we are considering and the conformal factor of $E$).
		In order to deform $\overline{g}$ to a scalar flat metric on $\Sigma$ we need to prove that the conformal Laplacian $Q_{\Sigma}: \mathcal{W}^{1,2}_{\frac{3-n}{2}}(\Sigma)\to \mathcal{W}^{-1,2}_{\frac{-1-n}{2}}(\Sigma)$ is an isomorphism. First of all, Lemma 19 applied for $\alpha=0$ implies at once that $Q_{\Sigma}$ has to be injective. At that point, a standard application of the Sobolev inequality \cite{Bar86} gives that for any datum $\theta\in\mathcal{W}^{-1,2}_{\frac{-1-n}{2}}(\Sigma)$ the functional $\mathcal{F}_{\theta}$ given by
		\[ \varphi\mapsto \int_{\Sigma}\left(\left|\nabla_{\Sigma}\varphi\right|^{2}+\frac{n-3}{4(n-2)}\varphi^{2}-\theta\varphi\right)\,d\mathscr{H}^{n-1}
		\]
		is bounded from below and coercive and thus, following the direct method of the Calculus of Variations (where we exploit the Rellich compactness for these weighted spaces, as given in \cite{Bar86}), we conclude that it must have a critical point $\varphi_{0}\in \mathcal{V}_{0}(\Sigma)= \mathcal{W}^{1,2}_{\frac{3-n}{2}}(\Sigma)$. This completes the proof of the claim. When $\theta=-\frac{n-3}{2(n-2)}R_{\Sigma}$ we thus obtain a function $\chi$ with the property that $Q_{\Sigma}(1+\chi)=0$. If we let $\psi=1+\chi$ strong stability and integration by parts give
		\[ -c(n-1)\int_{\Sigma}R_{\Sigma}\psi^{2}\,d\mathscr{H}^{n-1}\leq 2c(n-1)\int_{\Sigma}\left|\nabla_{\Sigma}\psi\right|^{2}\,d\mathscr{H}^{n-1}\leq\int_{\Sigma}\left|\nabla_{\Sigma}\psi\right|^2\,d\mathscr{H}^{n-1}
		\]
		\[ =-c(n-1)\int_{\Sigma}R_{\Sigma}\psi^{2}\,d\mathscr{H}^{n-1}+\lim_{\sigma\to\infty}\int_{\partial B_{\sigma}}\psi\nabla_{\eta}\psi\,d\mathscr{H}^{n-2}.
		\]
		Since linear theory (see e. g. Meyers \cite{Mey63}) gives, for $\psi$, an expansion of the form
		\[ \psi(x')=1+\frac{2\mathcal{M}}{n-2}|x'|^{3-n}+O(|x'|^{2-n})
		\]
		we imediately see that previous inequality forces $\mathcal{M}\leq 0$. On the other hand, $\mathcal{M}$ represents the ADM mass of the asymptotically flat, scalar flat manifold $(\Sigma,\psi^{4/(n-3)}\overline{g})$ so that the \textsl{Positive Mass Theorem} (Theorem 9), in dimension $n-1$, gives $\mathcal{M}\geq 0$ and hence in fact $\mathcal{M}=0$.
		
		At that stage, we can re-consider the previous chain of inequalities and see at once that  we are in position to exploit Lemma 19 and conclude that $\psi=1$ identically on $\Sigma$. This means that $(\Sigma,\overline{g})$ had to be scalar flat.

		By performing computations similar to those we did above in the case $n=3$ we see that \textsl{if} $\mathcal{M}>0$ then, along the end $E$, one has $Ric(\nu,\nu)\geq C|x|^{n}$ while $|R|\leq C|x|^{n+1}$ so that the Gauss equation gives $R_{\Sigma}\leq -C|x|^{-n}$ as $x\to \infty$ (for $C>0$). This contradicts the conclusion stated in the previous paragraph, unless $(M,g)$ has mass zero, and the conclusion follows from Theorem 9.  
	\end{proof}

	\appendix
	
	\section{Improving decay from tangent cone uniqueness}\label{sec:add}
	
	We give here the details of an argument we mentioned both in Lemma 14 (auxiliary to the proof of Theorem 1) and in Lemma 18 (auxiliary to the proof of Theorem 2). In both cases, we could describe the hypersurface $\Sigma$ (or, more precisely, each end thereof) as the graph of a function $u\in\mathcal{C}^{2}\left(\Pi^{\ast}\simeq\mathbb{R}^{n-1}\setminus B;\mathbb{R}\right)$ with the properties that
	\[ \lim_{|x'|\to\infty}|\nabla_{\Pi} u(x')|=0, \ \ \int_{\Pi}|\nabla^2_{\Pi} u|^{2}\,d\mathscr{L}^{n-1}<\infty.
	\]
	(The function $u$ is only defined in the complement of an open ball in an hyperplane $\Pi$).
	We remark that the second condition is equivalent to the finiteness of the total curvature of $\Sigma$, since the gradient is bounded.
	
	\subsection{From H\"older decay of the gradient to optimal decay}
	
	In this subsection, we assume that $|\nabla^2_{\Pi} u|\leq C |x'|^{-1-\alpha}$ (in asymptotically flat coordinates $\left\{x\right\}$) for some $\alpha\in (0,1)$ and we indicate how an asymptotic expansion for $u$ can be obtained. First of all, let us observe that by our assumption we also get $|u|+|x'||\nabla_{\Pi} u|\leq C|x'|^{1-\alpha}$. Moreover, based on the fact that we are working with asymptotically flat data, all of the previous statements can be phrased in terms of Euclidean derivatives in asymptotically flat coordinates. A simple computation shows that the function $u$ solves a quasi-linear elliptic problem of the form
	
	\[ \sum_{i,j=1}^{n-1}\left(\delta_{ij}-\frac{u_{,i}u_{,j}}{1+|\partial u|^{2}}\right)u_{,ij}+2\left(\frac{n-1}{n-2}\right)\sqrt{1+|\partial u|^{2}}\frac{\partial h}{\partial \nu_{0}}+\mathcal{R}(x')=0
	\]
	
	where $\nu_{0}=\frac{\left(-\partial u, 1\right)}{\sqrt{1+|\partial u|^{2}}}$ is, as usual, the Euclidean unit normal and $\mathcal{R}$ is a remainder term such that
	
	\[\left|\mathcal{R}(x')\right|\leq C\left[\frac{|u|}{|x'|^{n+1}}\left(1+|\partial u|+|\partial u|^{2}\right)+\frac{1}{|x'|^{n}}\left(1+|\partial u|+|x'||\partial^{2}u|\right)\right].
	\]
	
	Based on the \textsl{a-priori} decay of $\partial u, \partial\partial u$ we can rewrite the equation in the much simpler form
	\[ \Delta u = f
	\] 
	where $|f(x')|\leq C|x'|^{\max\left\{-1-3\alpha, -n+1-\alpha, -n\right\}}$. Standard PDE theory guarantees that, given any $\varepsilon'>0$, we can find a solution $v$ of the problem
	\begin{equation*}
		\Delta v=f \ \textrm{on} \ \ \Pi^{\ast}=\mathbb{R}^{n-1}\setminus B 
	\end{equation*}
	satisfying the bound 
	\[
	|v(x')|+|x'||\partial v(x')|+|x'|^2|\partial^2 v(x')|\leq 
 C |x'|^{\max\left\{1-3\alpha,-n+3-\alpha,-n+2\right\}+\varepsilon'} \ 
	\] and thus necessarily $w=u-v$ is an harmonic function defined on the complement of a ball $B$ in $\mathbb{R}^{n-1}$. Since $w(x')=o(|x'|)$ as $|x|\to\infty$, if $n=3$ we conclude that $w$ grows at most logarithmically and has an expansion of the form
	\[ 
	w(x')=a_{-1}\log|x'|+a_{0}+\sum_{m\geq 1}\left(b_{m}\cos(m\theta)+b_{m}'\sin(m\theta)\right)|x'|^{-m}.
	\]
	Similarly, if $n\geq 4$ we conclude that $w(x')$ decays (modulo an additive constant) as prescribed by the inequality $w(x')\leq C|x'|^{3-n}$ and has an expansion. As a result, we obtain an improved bound for $u$ and we can exploit this information in the minimal surface equation solved by $u$.
	Iterating this argument finitely many times, we obtain that (for any given $\varepsilon>0$):
	\begin{itemize}
		\item{if $n=3$ grows at most logarithmically and $u(x')=a+b\log|x'|+O(|x'|^{-1+\varepsilon})$ as $|x'|\to\infty$;}
		\item{if $n\geq 4$ decays at a rate $|x'|^{3-n}$ and $u(x')=a+b|x'|^{3-n}+O(|x'|^{2-n+\varepsilon})$ as $|x'|\to\infty$.}
	\end{itemize}

	\subsection{Proving H\"older decay of the gradient}
	
	We then move to the preliminary part of the argument consisting in getting a pointwise decay estimate for $|\nabla u|$. By the De Giorgi Lemma (see, for instance, Theorem 5.3.1 in \cite{Mor66}) it is enough, to that aim, to prove an integral estimate of the form
	\[\int_{\Pi\setminus B_{\sigma}}|\nabla\nabla u|^{2}\,d\mathscr{L}^{n-1}\leq C\sigma^{-2\alpha}
	\]
	for some constant $C>0$ independent of $\sigma$. 
	Let us fix an index $1\leq k\leq n-1$ and differentiate in $x^{k}$ the equation solved by the function $u$: in turn $v_{k}=u_{,k}$ solves $T(v_{k})=\mathcal{R}_k$
	where
	\[ T(v_{k})=\partial_{x^{i}}(a^{ij}\partial_{x^{j}}v_{k})
	\]
	for $a^{ij}=\left(\delta^{ij}-{\nu_{0}}_{i}{\nu_{0}}_{j}\right)/\sqrt{1+|\partial u|^{2}}$ and $\left|\mathcal{R}_k(x')\right|\leq C|x'|^{-n}$ as $|x'|\to\infty$.
	From this equation we see that for any constant vector $\beta$ we have that $|\partial u-\beta|^{2}=\sum_{k}(v_{k}-\beta_{k})^{2}$ satisfies
	\[ T(\left|\partial u-\beta\right|^{2})=2\sum_{i,j,k}a^{ij}(\partial_{x^{i}}\partial_{x^{k}}u)(\partial_{x^{j}}\partial_{x^{k}}u)+2\sum_k \mathcal{R}_k (\partial_{x^k}u-\beta_k)\geq |\partial\partial u|^{2}-C|x'|^{-n}
	\]
	at least for $|x|$ large enough. That being said, for any large $\sigma$ we choose a cutoff function $\varphi$ which is one outside $B_{2\sigma}$ and zero inside $B_{\sigma}$ (this comes from the strong stability when $4\leq n<8$ and relies on the logarithmic cut-off trick when $n=3$ instead). We may multiply by $\varphi^{2}$ and integrate by parts using the integrability of $|\partial\partial u|^{2}$ to justify the result, thus obtaining
	\[\int_{B_{2\sigma}\setminus B_{\sigma}} -a^{ij}(\partial_{x^{i}}\varphi^{2})(\partial_{x^{j}}\left|\partial u-\beta\right|^{2})\,d\mathscr{L}^{n-1}\geq \int_{\Pi\setminus B_{\sigma}}\varphi^{2}|\partial\partial u|^{2}\,d\mathscr{L}^{n-1}-C\int_{\Pi\setminus B_{\sigma}}|x'|^{-n}.
	\]
	The standard manipulation (based on Young's inequality) and rearrangement give
	\[\int_{\Pi\setminus B_{\sigma}}\varphi^{2}|\partial\partial u|^{2}\,d\mathscr{L}^{n-1}\leq C\int_{B_{2\sigma}\setminus B_{\sigma}}|\partial\varphi|^{2}|\partial u-\beta|^{2}\,d\mathscr{L}^{n-1} +C\sigma^{-1}
	\]
	which of course implies
	\[\int_{\Pi\setminus B_{2\sigma}}|\partial\partial u|^{2}\,d\mathscr{L}^{n-1}\leq C\sigma^{-2}\int_{B_{2\sigma}\setminus B_{\sigma}}|\partial u-\beta|^{2}\,d\mathscr{L}^{n-1}+C\sigma^{-1}.
	\]
	Choosing the vector $\beta$ to be the average of the gradient $\partial u$ over the annulus and applying the Poincar\'e-Wirtinger inequality we obtain
	\[\int_{\Pi\setminus B_{2\sigma}}|\partial\partial u|^{2}\,d\mathscr{L}^{n-1}\leq C\int_{B_{2\sigma}\setminus B_{\sigma}}|\partial\partial u|^{2}\,d\mathscr{L}^{n-1}+C\sigma^{-1}.
	\]
	If we denote $J(\sigma)=\int_{\Pi\setminus B_{\sigma}}|\partial\partial u|^{2}\,d\mathscr{L}^{n-1}$, the previous inequality can be written in the form
	\[ J(2\sigma)\leq C(J(\sigma)-J(2\sigma))+C\sigma^{-1}
	\]
	or $J(2\sigma)\leq\theta J(\sigma)+\xi\sigma^{-1}$.
	At that stage, the claim is proved by means of a version \textsl{at infinity} of a general iteration lemma, as stated here.

	Let $\sigma\in\R, \ \sigma\geq\sigma_{0}$ and suppose we know that there exist constants $\theta\in\left(0,1\right)$, $\lambda>1$ and $\xi>0$ such that
	\[
	J(\l\sigma)\leq \theta J(\sigma)+\xi\sigma^{-1}, \ \forall \ \sigma\geq\sigma_{0}.
	\]
	It is convenient to set $\omega=-\log_{\l}\theta$ and let us notice that $\omega>0$, while we cannot say, at least a priori, whether $\omega\in\left(0,1\right]$ or instead $\omega>1$. Let us define $\tau=\sigma^{-1}$ and $G(\tau)=J(\frac{1}{\tau})$ for $\tau\in\left(0,\tau_{0}\right]$, where clearly $\tau_{0}=\sigma_{0}^{-1}$: our assumption turns into the equivalent form
	\[
	G\left(\frac{\tau}{\l}\right)\leq \l^{-\omega}G(\tau)+\xi\tau, \ \forall \ \tau\in\left(0,\tau_{0}\right].
	\]

	\begin{lemma}
		Let $G:\left(0,\tau_{0}\right]$ a non-decreasing function such that 
		\be\label{assumpt}
		G\left(\frac{\tau}{\l}\right)\leq \l^{-\omega}G(\tau)+\xi\tau, \forall \ \tau\in\left(0,\tau_{0}\right]
		\ee
		for some $\omega>0$ and $\xi>0$. Then, there exists a real constant $C=C\left(\omega,\xi,\l,\tau_{0}\right)$ such that 
		\[
		G(\tau)\leq C \tau^{\overline{\omega}}, \forall \ \tau\in\left(0,\tau_{0}\right]
		\]
		where we have set $\overline{\omega}=\min\left\{1,\omega\right\}$.
	\end{lemma}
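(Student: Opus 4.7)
The plan is to iterate the hypothesis down through the geometric sequence of scales $\tau_{k}=\tau_{0}\lambda^{-k}$. Setting $S_{k}=G(\tau_{k})$, the hypothesis applied at $\tau_{k-1}$ reads $S_{k}\leq\lambda^{-\omega}S_{k-1}+\xi\tau_{k-1}$, and by a straightforward induction this unfolds into
\[
S_{k}\leq\lambda^{-k\omega}S_{0}+\xi\tau_{0}\lambda^{-(k-1)}\sum_{i=0}^{k-1}\lambda^{i(1-\omega)}.
\]
The problem therefore reduces to estimating the geometric sum on the right, whose behaviour is dictated by the sign of $1-\omega$.

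When $\omega>1$ the common ratio $\lambda^{1-\omega}$ is strictly less than one, so the sum is uniformly bounded by $(1-\lambda^{1-\omega})^{-1}$, the error term is $O(\lambda^{-k})$, and the first term $\lambda^{-k\omega}S_{0}$ is even smaller; using $\lambda^{-k}=\tau_{k}/\tau_{0}$ this collapses to $S_{k}\leq C\tau_{k}$, which matches $\overline{\omega}=1$. When $\omega<1$ the ratio exceeds one and the sum is comparable to $\lambda^{k(1-\omega)}$, so both the main term and the error term are of size $O(\lambda^{-k\omega})=O(\tau_{k}^{\omega})$, matching $\overline{\omega}=\omega$. In either case the implicit constant depends only on $\omega,\xi,\lambda$ and on the initial value $S_{0}=G(\tau_{0})$, the latter being finite thanks to the monotonicity of $G$ on $(0,\tau_{0}]$.

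To pass from the discrete bound at the points $\tau_{k}$ to an arbitrary $\tau\in(0,\tau_{0}]$ I would select $k=\lfloor\log_{\lambda}(\tau_{0}/\tau)\rfloor$, which is the largest integer with $\tau_{k}\geq\tau$, so that $\tau\leq\tau_{k}\leq\lambda\tau$; invoking monotonicity then yields
\[
G(\tau)\leq G(\tau_{k})=S_{k}\leq C\tau_{k}^{\overline{\omega}}\leq C\lambda^{\overline{\omega}}\tau^{\overline{\omega}},
\]
and absorbing the factor $\lambda^{\overline{\omega}}\tau_{0}^{-\overline{\omega}}$ into the constant gives the statement.

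The sole delicate point, and the only potential obstacle, is the borderline case $\omega=1$, in which the common ratio of the geometric sum becomes unity and the sum degenerates to $k$, introducing a spurious logarithmic factor $C\tau|\log\tau|$. This is a standard feature of iteration lemmas of this type and is handled either by a sharper book-keeping (absorbing the logarithm into the constant on any fixed interval $(0,\tau_{0}]$ where $G$ is bounded) or by slightly perturbing the exponent, since $G(\tau)\leq C\tau^{1-\varepsilon}$ holds for every $\varepsilon>0$; in the applications of the lemma to Lemma 15 and Lemma 19, where the exponent $\omega=2\alpha$ or an analogous quantity is strictly away from $1$, the argument is already conclusive without any further refinement.
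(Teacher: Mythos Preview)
Your argument is essentially identical to the paper's: both iterate the hypothesis along the geometric scale $\tau_{0}\lambda^{-k}$, obtain the same summation formula, split into the cases $\omega>1$ and $\omega<1$ to estimate the geometric sum, and then use monotonicity of $G$ to pass from the discrete points to a general $\tau$. Your observation about the borderline $\omega=1$ is apt; the paper's own proof in fact writes the bound $\sum_{j=1}^{P}\lambda^{(1-\omega)j}\leq \frac{\lambda^{1-\omega}}{\lambda^{1-\omega}-1}(\tau_{0}/\tau)^{1-\omega}$ for the range $\omega\in(0,1]$, which is undefined at $\omega=1$, so this is a shared imprecision rather than a defect peculiar to your write-up, and as you note it is irrelevant for the applications.
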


	\begin{proof}
		Given $\tau\leq\tau_{0}$, let $Q\in\mathbb{N}$ be the only nonnegative integer such that
		\[
		\frac{\tau_{0}}{\l^{Q+1}}<\tau\leq\frac{\tau_{0}}{\l^{Q}}.
		\]
		Now if $Q\geq 1$, by our assumption \eqref{assumpt}, we know that
		\[
		G\left(\frac{\tau_{0}}{\l^{Q}}\right)\leq \l^{-\omega}G\left(\frac{\tau_{0}}{\l^{Q-1}}\right)+\xi\frac{\tau_{0}}{\l^{Q-1}}
		\]
		and then, by iteration, an elementary induction argument gives that
		\[
		G\left(\frac{\tau_{0}}{\l^{Q}}\right)\leq \l^{-Q\omega}G(\tau_{0})+\l^{\omega}\frac{\xi\tau_{0}}{\l^{Q}}\sum_{j=1}^{Q}\l^{\left(1-\omega\right)j}
		\]
		and therefore, since $G(\cdot)$ is nondecreasing, this implies
		\be\label{brute}
		G\left(\tau\right)\leq \l^{-Q\omega}G(\tau_{0})+\l^{\omega}\frac{\xi\tau_{0}}{\l^{Q}}\sum_{j=1}^{Q}\l^{\left(1-\omega\right)j}.
		\ee
		To proceed further, it is convenient to consider the two cases when $\omega>1$ or $\omega\in\left(0,1\right]$ separately.
		In the former, we immediately get from \eqref{brute}, by simply replacing the partial sum by the whole series (which is obviously \textsl{summable})
		\[
		G\left(\tau\right)\leq \l^{\omega}\left(\frac{\tau}{\tau_{0}}\right)^{\omega}G\left(\tau_{0}\right)+C(\omega,\xi,\l)\tau
		\]
		and hence
		\[
		G\left(\tau\right)\leq C(\omega,\xi,\l,\tau_{0})\tau, \ \forall \ \ \tau\in\left(0,\tau_{0}\right].
		\]
		In the latter case, it is enough to get an upper bound on such partial sum:
		\[
		\sum_{j=1}^{Q}\l^{\left(1-\omega\right)j}\leq \frac{\l^{1-\omega}}{\l^{1-\omega}-1}\left(\frac{\tau_{0}}{\tau}\right)^{1-\omega}
		\]
		and hence, from \eqref{brute} we imediately get
		\[
		G\left(\tau\right)\leq C(\omega,\xi,\l,\tau_{0})\tau^{\omega}, \forall \ \tau\in\left(0,\tau_{0}\right]
		\]
		which completes the proof.
	\end{proof}

	\textsl{Acknowledgments}. The author wishes to express his deepest gratitude to his PhD advisor Richard Schoen for his outstanding guidance and for his constant encouragement: his influence on this work is more than manifest. He would also like to thank Simon Brendle, Alessio Figalli, Andrea Malchiodi, Andr\'e Neves, Rafe Mazzeo, Brian White and especially Otis Chodosh and Michael Eichmair for several useful discussions and for their interest in this work.

	\bibliographystyle{plain}

\end{document}